\newtheorem{theorem}{Theorem}
\newtheorem*{theorem*}{Theorem}
\newtheorem{lemma}[theorem]{Lemma}
\newtheorem*{claim*}{Claim}
\newcommand{\E}{\mathbb{E}}
\newcommand{\Pb}{\mathbb{P}}
\newcommand{\Gnp}{G(n,p)}
\newcommand{\mc}{\mathcal}
 \title{Random triangles in random graphs}
\renewcommand{\le}{\leqslant}
\renewcommand{\ge}{\geqslant}
\renewcommand{\epsilon}{\varepsilon}
\author{Annika Heckel
\thanks{Mathematical Institute, University of Oxford,
Andrew Wiles Building, Woodstock Road, Oxford OX2~6GG, UK. E-mail: 
\texttt{heckel@maths.ox.ac.uk}. 
Research supported by ERC Grant 676632.
}
}
\begin{document}
\maketitle

\begin{abstract}
In \cite{riordan2018random}, Oliver Riordan shows that for $r \ge 4$ and $p$ up to and slightly larger than the threshold for a $K_r$-factor, the hypergraph formed by the copies of $K_r$ in $G(n,p)$ contains a copy of the binomial random hypergraph $H=H_r(n,\pi)$ with $\pi \sim p^{r \choose 2}$. 
For $r=3$, he gives a slightly weaker result where the density in the random hypergraph is reduced by a constant factor. Recently, Jeff Kahn announced an asymptotically sharp bound for the threshold in Shamir's hypergraph matching problem for all $r \ge 3$. With Riordan's result, this immediately implies an asymptotically sharp bound for the threshold of a $K_r$-factor in $\Gnp$ for $r \ge 4$. In this note, we resolve the missing case $r=3$ by modifying the argument in \cite{riordan2018random}. This means that Kahn's result also implies a sharp bound for triangle factors in~$\Gnp$.
\end{abstract}

\section{Introduction}
For $r \ge 2$, $n \ge 1$, $\pi=\pi(n) \in [0,1]$, we denote by $H_r(n,\pi)$ the binomial random $r$-uniform hypergraph where each of the ${n \choose r}$ potential hyperedges is included independently with probability~$\pi$. In \cite{riordan2018random}, Oliver Riordan showed that for $r \ge 4$ and $p$ up to and slightly beyond $n^{-2/r}$, the hypergraph formed by the copies of $K_r$ in the random graph $\Gnp=H_2(n,p)$ contains a copy of $H_r(n,\pi)$ with almost the same density.
\begin{theorem}[\cite{riordan2018random}]\label{theoremcoupling}
Let $r \ge 4$ be given. There exists some $\epsilon = \epsilon(r)>0$ such that, for any $p=p(n) \le n^{-2/r + \epsilon}$, the following holds. For some $\pi=\pi(n) \sim p^{r \choose 2}$, we may couple the random graph $G=G(n,p)$ with the random hypergraph $H=H_r(n,\pi)$ so that, whp\footnote{We say that an event $E=E(n)$ holds \emph{with high probability} (whp) if $\lim_{n \rightarrow \infty} \Pb(E) =1$.}, for every hyperedge in $H$ there is a copy of $K_r$ in $G$ with the same vertex set.
\end{theorem}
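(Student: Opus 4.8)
\medskip
\noindent\emph{Proof sketch.}
It suffices to construct, on one probability space, the random graph $G=G(n,p)$ together with a copy $H$ of $H_r(n,\pi)$ --- for a suitable $\pi=(1-\delta)p^{\binom r2}$ with $\delta=\delta(n)\to 0$ --- so that, with probability $1-o(1)$, $E(H)$ is contained in the set $\mathcal K=\mathcal K(G)$ of $r$-sets spanning a $K_r$ in $G$; the deficit $\delta$ is exactly the slack into which all error terms will have to fit. Since $H_r(n,\pi)$ is a product measure, the only obstacle is that the indicators $X_S=\mathbbm 1[S\in\mathcal K]$ are not independent: were they independent one could simply retain each copy in $H$ with probability $\pi/p^{\binom r2}$ and be done. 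The indicators $X_S$ and $X_{S'}$ are correlated exactly when $|S\cap S'|\ge2$, i.e.\ when $\binom S2$ and $\binom{S'}2$ share an edge of $K_n$, and the whole point is that for $p\le n^{-2/r+\epsilon}$, with $\epsilon=\epsilon(r)$ small, these correlations are very weak.

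The first step is a routine ``niceness'' lemma for $G$, via first and second moment estimates. Writing $\mu=\binom nr p^{\binom r2}$ for the expected number of copies of $K_r$, one has $|\mathcal K|=(1+o(1))\mu$ whp. Call $S\in\mathcal K$ \emph{lonely} if no edge of $\binom S2$ lies in a second copy of $K_r$ in $G$; lonely copies are pairwise edge-disjoint. The key estimate is $\Pr[S\in\mathcal K,\ S\text{ not lonely}]=o(p^{\binom r2})$: the contribution of a second copy $S'$ with $|S\cap S'|=i$ is of order $n^{r-i}p^{2\binom r2-\binom i2}$, which, divided by $p^{\binom r2}$, carries exponent $1-i(r-i+1)/r<0$ for every $i\in\{2,\dots,r-1\}$ when $r\ge 3$ (up to a harmless factor $n^{O(\epsilon)}$). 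The same kind of computation bounds the expected number of each of the small configurations of two or three overlapping copies that the coupling will encounter by $o(\mu)$; forcing all these exponents to be negative is precisely what the hypothesis $p\le n^{-2/r+\epsilon}$ buys. It is also worth noting that $H_r(n,\pi)$ itself has only $o(\mu)$ pairs of hyperedges meeting in two or more vertices, whereas $\mathcal K$ has polynomially more, so $\mathcal K$ has ample room for all of them.

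For the coupling I would reveal the edges of $G$ in a carefully chosen order --- for instance, sweeping over the $r$-sets in a uniformly random order and, at step $S$, exposing the still-unrevealed edges of $\binom S2$ together with an independent $q$-coin $c_S$ --- and place $S$ into $H$ essentially when $S$ is a lonely copy with $c_S$ heads, having first discarded the $o(\mu)$ copies caught in a non-lonely overlap and then added back a lower-order random correction to reproduce the handful of close pairs that $H_r(n,\pi)$ needs; tuning $q$ then delivers the marginal $\pi$. The crux --- really the only substantive part of the argument --- is to check that, at every step, the conditional probability that $S$ joins $H$ given everything exposed so far is at least $\pi$. Two effects must be controlled: that the edges and coins already exposed for earlier sets only $o(\pi)$-rarely preclude $S$ from entering (here edge-disjointness of lonely copies, together with the rarity of the overlap configurations above, does the work), and that the loneliness requirement --- which refers to edges outside $\binom S2$ --- together with the close-pair correction costs only a further $o(\pi)$ in the conditional probability. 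For $r\ge4$ each of these errors carries a strictly negative power of $n$, and the construction goes through with $\delta\to 0$; for $r=3$ the corresponding bounds are tight and the most direct version of the argument loses a constant factor in the density --- which is exactly the gap the remainder of this note is written to close.
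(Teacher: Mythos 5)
Your overlap exponent $1-i(r-i+1)/r<0$ for $i\in\{2,\dots,r-1\}$ is indeed the arithmetic fact driving Riordan's argument, and identifying the crux as a lower bound of $\pi$ on a conditional probability at each coupling step is right. But the construction of $H$ you sketch is not the one the paper relies on (following \cite{riordan2018random}), and it has a genuine gap. Riordan's Algorithm~11 builds $H$ \emph{sequentially}: at step $j$ the status of hyperedge $h_j$ is decided using its exact conditional probability $\pi'_j$ given the history, and since $H_r(n,\pi)$ is a product measure and the history only concerns other hyperedges, $\pi'_j=\pi$ automatically; thus $H$ has the correct law by construction, and the entire content of the proof is the lower bound $\pi_j\ge p^{\binom r2}(1-Q)\ge\pi$ on the conditional probability that the matching $K_r$ is present in $G$, with $Q$ controlled via the ``avoidable configuration'' machinery (Riordan's Definition~7, Lemmas~8 and~9) which you do not invoke and which is what lets one conclude the coupling can only fail on an $o(1)$ event.

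Your proposal instead manufactures $H$ directly out of $G$: lonely copies of $K_r$, thinned by $q$, plus a ``lower-order random correction'' to reinstate close pairs. That correction is not lower order --- it is the whole difficulty. At $\pi\sim p^{\binom r2}$ with $p=n^{-2/r+o(1)}$, $H_r(n,\pi)$ contains a pair of hyperedges sharing two vertices with probability bounded away from~$0$, while the thinned lonely family contains \emph{none} by construction; so before any correction the two distributions are $\Omega(1)$ apart in total variation. To make the correction precise you would need to reintroduce close pairs that are still supported on copies of $K_r$ in $G$ (whose available positions are a vanishing, $G$-dependent fraction of all close-pair positions) and argue that the final object is \emph{exactly} the product measure, not merely near it; your sketch does not say how. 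Relatedly, the crux sentence --- that the conditional probability $S$ joins $H$ is at least $\pi$ --- does not cohere with the stated inclusion rule (``$S\in H$ iff $S$ is a lonely copy and $c_S$ is heads''), under which that probability is $0$ or $1$ once the relevant edges and coin are exposed. Adopting Riordan's sequential revelation of $H$ is precisely how these problems are avoided; with it, your exponent computation slots into the bound on $Q$ and the argument goes through.
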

In particular, Theorem \ref{theoremcoupling} applies when $p$ is in the range of the threshold of a $K_r$-factor in $\Gnp$, or accordingly when $\pi$ is in the range of the threshold for a complete matching in $H_r(n,\pi)$, both of which were famously determined up to a constant factor by Johansson, Kahn and Vu \cite{jkv}. Recently, Jeff Kahn announced a proof that the threshold for a complete matching in $H_r(n,\pi)$ is at $\pi\sim (r-1)!n^{-r+1} \log n$, giving an asymptotically sharp answer to Shamir's problem. Together with Theorem \ref{theoremcoupling}, this immediately carries over to $K_r$-factors in $\Gnp$, implying a sharp threshold at $p \sim \left((r-1)! \log n\right)^{1/{r \choose 2}}n^{-2/r} $ for $r \ge 4$.

For $r=3$, the proof in \cite{riordan2018random} only gives a weaker result where $\pi$ is a constant fraction of~$p^3$. In this note, we show that Theorem \ref{theoremcoupling} also holds for $r=3$, modifying the proof in~\cite{riordan2018random}. This means that Kahn's result also implies a sharp threshold for a triangle factor in $G(n,p)$ at $p \sim \left(2 \log n\right)^{1/3} n^{-2/3}$.

\begin{theorem}
 The conclusion of Theorem \ref{theoremcoupling} also holds for $r=3$.
\end{theorem}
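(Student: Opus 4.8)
The plan is to follow Riordan's strategy for $r \geq 4$ and locate the exact place where the argument loses a constant factor when $r=3$, then repair it. Riordan's approach is, roughly, a two-round sprinkling/coupling: one reveals $G(n,p)$ in stages, and at each stage one wants to show that the copies of $K_r$ appearing behave, after a suitable thinning, like independent hyperedges of $H_r(n,\pi)$ with $\pi \sim p^{\binom{r}{2}}$. The obstruction to a clean coupling is that two potential $K_r$'s can share up to $\binom{r-1}{2}$ edges, so the events ``$K_r$ present on vertex set $S$'' are positively correlated; to decouple them one conditions on, or exposes separately, the edges lying inside small vertex sets, and then the remaining randomness makes distinct $r$-sets nearly independent. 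For $r \geq 4$ the overlap structure is mild enough that a direct second-moment / switching argument gives density $(1-o(1))p^{\binom{r}{2}}$; for $r=3$ two triangles sharing an edge is the dominant correlation and the naive bound throws away a constant proportion of triangles to kill it.

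First I would set up the coupling exactly as in \cite{riordan2018random}: fix $p = p(n) \leq n^{-2/3+\epsilon}$, split $p$ as $p = p_1 + p_2$ (or use a multi-round exposure) with $p_2$ a small ``sprinkling'' probability, and expose $G_1 = G(n,p_1)$ first. The target is a hypergraph $H = H_3(n,\pi)$ with $\pi \sim p^3$; one builds the coupling edge-set by edge-set over the potential triples. The key quantity to control is, for a triple $S = \{u,v,w\}$, the conditional probability that $S$ spans a triangle in $G$ given the status of all ``previously examined'' triples, and one wants this to be $(1+o(1))\pi$ uniformly, with enough independence across triples that a Lipschitz/martingale argument or a direct comparison of product measures goes through. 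The place where $r=3$ bites is bounding the effect of conditioning on a triple $S'$ that shares the edge $\{u,v\}$ with $S$: knowing $S'$ is a triangle forces $\{u,v\} \in G$, which multiplies the probability that $S$ is a triangle by roughly $1/p$, a constant-order-free but genuinely large factor when it occurs.

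The fix I would pursue — and I expect this to be the main obstacle — is to handle the shared-edge correlations explicitly rather than by a crude union/second-moment bound. Concretely: reveal the edge set of $G$ in two rounds where in the first round one exposes, for each pair, only whether it lies in $G$, and process triples in an order so that when triple $S$ is examined one has already ``used up'' the information about its edges; the randomness that then decides whether $S$ is a triangle is exactly the joint event that its three pairs are all present, and one wants to argue that after a suitable down-sampling (discarding triples lying in any atypically dense configuration — e.g. $S$ such that some vertex has many triangles through it, or such that an edge of $S$ is in many triangles) the surviving triangles form a set that dominates $H_3(n,\pi)$ for $\pi = (1-o(1))p^3$. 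The technical heart is to show that the ``bad'' configurations — pairs of triangles sharing an edge, and more generally the book graphs $K_2 + \overline{K_t}$ — occur rarely enough, for $p \leq n^{-2/3+\epsilon}$ with $\epsilon$ small, that removing the triangles they involve costs only a $(1+o(1))$ factor, not a constant factor. This is a counting statement: the expected number of pairs of edge-sharing triangles is $\Theta(n^4 p^5)$ versus $\Theta(n^3 p^3)$ triangles, and $n^4 p^5 / (n^3 p^3) = n p^2 \leq n^{1-4/3+2\epsilon} = n^{-1/3+2\epsilon} = o(1)$, so in fact almost no triangle shares an edge with another — this is the observation that should let one recover the full density, and Riordan's loss for $r=3$ presumably came from a bound that did not exploit this sparsity.

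Having established that whp only a $o(1)$-fraction of triangles are involved in any overlap, the remaining steps mirror \cite{riordan2018random}: condition on $G_1$, show the number of ``isolated'' triangles (sharing no edge with another triangle) concentrates, use the sprinkling round $p_2$ to correct lower-order discrepancies and to handle triples that were unlucky in the first round, and finally check that the resulting family stochastically dominates $H_3(n,\pi)$ with $\pi \sim p^3$ by verifying the conditional-probability-at-least-$\pi$ condition triple by triple (the standard coupling lemma for dominating a product measure). I would also need to re-examine any step in \cite{riordan2018random} that used $r \geq 4$ only to ensure some error term like $n^{r-1}p^{\binom{r}{2}+1}$ or the count of overlapping $K_r$'s is $o$ of the main term, and confirm that with the range $p \leq n^{-2/3+\epsilon}$ and $\epsilon = \epsilon(3)$ chosen small enough these all remain $o(1)$ for $r=3$; the delicate point is that at $r=3$ the exponent margins are tighter, so $\epsilon$ must be taken correspondingly smaller, but nothing qualitative changes.
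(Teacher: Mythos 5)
Your diagnosis of where the argument breaks for $r=3$ is not the right one, and the fix you sketch would not, as stated, repair Riordan's coupling.

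The actual obstruction in \cite{riordan2018random} is not pairs of triangles sharing an edge. In the step-by-step coupling one bounds $\pi_j \ge p^3(1-Q_j)$, and the genuinely dangerous situation is a previously excluded triangle $E_i$ with $E_i \subseteq E_j \cup R$: all three edges of $E_i$ are then forced into $G$, yet we are conditioning on $E_i$ \emph{not} being a triangle, so $\pi_j=0$ and the coupling fails outright. For $r=3$ this happens for one specific structure: the \emph{middle triangle} of a \emph{clean $3$-cycle}, i.e.\ the triangle spanned by the three vertices in which a cyclic chain of three hyperedges pairwise meet. Those three outer hyperedges force all nine edges into $G$, hence force the middle triangle to be present in $G$ even though the corresponding triple need never be a hyperedge of $H$. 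Your observation that $np^2=o(1)$, so almost no triangle shares an edge with another, is true but beside the point: two triangles sharing an edge do not cause the coupling to fail (that configuration is handled by the $Q_j$ bound already), and the clean $3$-cycle, which has only $\Theta(n^6p^9)=n^{O(\epsilon)}$ expected copies, is rare but not negligible and must be dealt with head on. Your proposed remedy of ``down-sampling'' triangles in dense configurations also does not obviously produce a coupling with the product measure $H_3(n,\pi)$: discarding triangles in $G$ without a matching modification on the $H$ side changes the target law, and the paper's whole difficulty is precisely to keep that law binomial.

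The paper's fix is genuinely different and you would not arrive at it from your sketch. One first reveals the set of clean $3$-cycles in $G$ and, separately, in $H$; since both counts are approximately Poisson (Chen--Stein) with nearly equal means $\lambda_1\sim\lambda_2=n^{O(\epsilon)}$, the two random sets of clean $3$-cycles have total variation distance $o(1)$ and can be coupled to agree whp (using that whp all clean $3$-cycles are vertex-disjoint, so conditionally on the count they are uniform over disjoint collections). One then conditions on the revealed clean $3$-cycles in both $G$ and $H$ and runs Riordan's algorithm. Under this conditioning the bad case cannot occur: if some excluded $E_i$ satisfies $E_i\subseteq E_j\cup R$ and $E_i$ were a middle triangle of a clean $3$-cycle, its edges would have been placed in $G$ at the start and $E_i$ could not have been answered ``no''; so by the $r=3$ version of Riordan's Lemma~9 any such failure produces an avoidable configuration in $H$, which happens with probability $o(1)$. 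There is a further case analysis for excluded clean $3$-cycles (the contribution to $Q_j$ from those indices, split by the number of components of the revealed graph on their six vertices), and a separate argument showing that a fully revealed excluded clean $3$-cycle also yields an avoidable configuration. None of this is a matter of ``taking $\epsilon$ smaller'' — a new structural idea (pre-coupling the clean $3$-cycles) is required, contrary to your concluding claim that ``nothing qualitative changes.''
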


\section{Proof}
The original proof fails for $r=3$ because of the presence of certain problematic configurations in $H$, the \emph{clean $3$-cycles}. These consist of three hyperedges where each pair meets in exactly one distinct vertex. Let $\Gamma$ denote the set of all potential clean $3$-cycles, then we say $\gamma \in \Gamma$ is in $H$ if the corresponding hyperedges are present. In a slight abuse of notation, we will also call an edge configuration where each such hyperedge is replaced by a triangle a clean $3$-cycle, and we say that $\gamma\in \Gamma$ is in $G$ if the corresponding edges are present.

 Our strategy is to first choose which clean $3$-cycles are present in $G$ and $H$, coupling their distributions so that whp we pick the same $3$-cycles for both $G$ and $H$. Conditioning on the event that $G$ and $H$ contain exactly these clean $3$-cycles, we run a modified version of the coupling argument from \cite{riordan2018random} where the bad case can no longer happen. For the sake of brevity, we do not repeat the entire argument from \cite{riordan2018random} but only describe the modifications.

As in the original proof, we will show that if our coupling fails, then either the maximum degree of the final hypergraph $H$ is too high or $H$ contains a certain type of sub-hypergraph called an `avoidable configuration', both of which only happens with probability $o(1)$. Define the avoidable configurations as in Definition 7 of the original proof, then by Lemma 8 in \cite{riordan2018random}, whp $H$ contains no avoidable configurations as long as we pick $\pi \le n^{-2+\epsilon'}$ for some small $\epsilon'>0$.

In \cite{riordan2018random}, the proof of Lemma 9 only fails for $r=3$ in one particular case, namely if vertices of the $K_3$ in question form the \emph{middle triangle} of a clean $3$-cycle in $H$. By this we mean the three vertices in which the hyperedges of the clean $3$-cycle meet (which is not a hyperedge in the clean $3$-cycle itself). Therefore, for $r=3$ the proof gives the following variant of Lemma 9.
\begin{lemma}\label{lemma6}
Let $H$ be a $3$-uniform hypergraph, and let $G$ be the simple graph obtained by replacing each hyperedge of $H$ by a triangle. If $G$ contains a triangle $T$ and the corresponding hyperedge is not present in $H$, then either the vertices of $T$ are the middle triangle of a clean $3$-cycle in $H$, or $H$ contains an avoidable configuration.
\end{lemma}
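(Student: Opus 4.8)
The plan is to unwind the definition of $G$ at the triangle $T$ and then run the case analysis from the proof of Lemma~9 in \cite{riordan2018random}, isolating the single case that produces the clean $3$-cycle alternative. Write $T=\{x,y,z\}$, so the three edges $xy$, $yz$, $xz$ lie in $G$ while $\{x,y,z\}\notin H$. Since every edge of $G$ is an edge of the triangle replacing some hyperedge of $H$, I would fix hyperedges $f_1,f_2,f_3\in H$ with $\{y,z\}\subseteq f_1$, $\{x,z\}\subseteq f_2$ and $\{x,y\}\subseteq f_3$; as $H$ is $3$-uniform, write $f_1=\{y,z,a_1\}$, $f_2=\{x,z,a_2\}$, $f_3=\{x,y,a_3\}$. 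Each $a_i$ lies outside the pair it is appended to, and since $\{x,y,z\}$ is not a hyperedge we also have $a_1\neq x$, $a_2\neq y$ and $a_3\neq z$; in particular $a_1,a_2,a_3\notin\{x,y,z\}$. Moreover $f_1,f_2,f_3$ are pairwise distinct: e.g.\ $f_1=f_2$ would give $\{x,y,z\}\subseteq f_1$ and hence $f_1=\{x,y,z\}$, contradicting $\{x,y,z\}\notin H$, and likewise for the other two pairs.

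First I would treat the case where $a_1,a_2,a_3$ are pairwise distinct. Then $a_1\notin f_2$ (it differs from $x$, from $z$ and from $a_2$) and $a_2\notin f_1$ (it differs from $y$, from $z$ and from $a_1$), so $f_1\cap f_2=\{z\}$; symmetrically $f_2\cap f_3=\{x\}$ and $f_1\cap f_3=\{y\}$. Thus $f_1,f_2,f_3$ are three hyperedges of $H$ whose pairwise intersections are the three distinct single vertices $x,y,z$, i.e.\ a clean $3$-cycle in $H$, and the set of these meeting vertices is exactly $T$ and coincides with none of $f_1,f_2,f_3$ --- so $T$ is its middle triangle. This is the first alternative in the statement, and (as noted before the lemma) it is precisely the configuration on which the argument of \cite{riordan2018random} breaks down.

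It remains to handle the case where two of $a_1,a_2,a_3$ coincide, and here I would simply follow the corresponding part of the proof of Lemma~9 in \cite{riordan2018random}. By the symmetry of $\{x,y,z\}$ we may assume $a_1=a_2=:a$, so $f_1=\{y,z,a\}$ and $f_2=\{x,z,a\}$ share the pair $\{z,a\}$. Then $\{f_1,f_2,f_3\}$ spans at most five vertices ($x,y,z,a$ and possibly $a_3$) while containing three distinct hyperedges, and as in \cite{riordan2018random} such a dense sub-hypergraph is an avoidable configuration once $\pi\le n^{-2+\epsilon'}$ with $\epsilon'$ small enough, the expected number of its copies in $H$ being $O(n^{5}\pi^{3})=o(1)$ (and smaller still if $a_3=a$). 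The step I expect to be the main obstacle is this last one: checking that the coincidence sub-cases really match Riordan's Definition~7 of an avoidable configuration, so that Lemma~8 of \cite{riordan2018random} applies unchanged. Since the proof of Lemma~9 in \cite{riordan2018random} already carries out exactly this reduction for general $r$, the only genuinely new content is the verification above that for $r=3$ the unique surviving case is the clean $3$-cycle, in which $T$ is forced to be the middle triangle.
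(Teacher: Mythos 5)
Your proof is correct and spells out explicitly what the paper states by deferring to Riordan's Lemma~9: the only case where that argument fails for $r=3$ is when $T$ is the middle triangle of a clean $3$-cycle, which is exactly your Case~1 ($a_1,a_2,a_3$ distinct, forcing $f_1,f_2,f_3$ to meet pairwise in $z,x,y$ respectively). The concern you flag in Case~2 is easily dispatched by the nullity check, just as the paper itself does for the other avoidable configuration that arises later (there with $v(H_0)\le 9$, $e(H_0)=5$, $c(H_0)=1$): with $e(H_0)=3$, $v(H_0)\le 5$ and $c(H_0)=1$ one gets $n(H_0)=2e(H_0)-v(H_0)+c(H_0)\ge 2$, so Definition~7 and Lemma~8 of \cite{riordan2018random} apply.
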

Let $X_1$ and $X_2$ denote the numbers of clean $3$-cycles in $G=G(n,p)$ and in $H=H_3(n,\pi)$, and let $\lambda_1= 120{n \choose 6}p^9=\E X_1$ and $\lambda_2 =120{n \choose 6}\pi^3=\E X_2$. If $p \le n^{-2/3+\epsilon}$, then $\lambda_1 = O(n^{9\epsilon})$. As in the proof of Theorem~1, we can later pick $\pi=(1-n^{-\delta})p^3$ for some constant $\delta>0$ (see Remark 2 in \cite{riordan2018random}). Decreasing $\epsilon$ if necessary, we can therefore assume $\lambda_1=\lambda_2-o(1)$. Let $\mc{C}_1$ and $\mc{C}_2$ be the collection of all clean $3$-cycles in $G$ and in $H$, respectively.
\begin{lemma}
$\mc{C}_1$ and $\mc{C}_2$ can be coupled so that whp $\mc{C}_1=\mc{C}_2$.
\end{lemma}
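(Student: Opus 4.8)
The plan is to show that the numbers $X_1$ and $X_2$ of clean $3$-cycles in $G=G(n,p)$ and $H=H_3(n,\pi)$ are each asymptotically Poisson with the same parameter, and in fact that the collections $\mc{C}_1$ and $\mc{C}_2$ can be simultaneously realised from a common source of randomness so that they coincide whp. The key observation is that a clean $3$-cycle on a fixed $6$-set of vertices is present in $G$ iff a set of $9$ specific edges is present, which has probability $p^9$, while in $H$ it is present iff $3$ specific hyperedges are present, which has probability $\pi^3$; since we have arranged $\pi=(1-n^{-\delta})p^3$, these two probabilities are equal up to a $1-o(1)$ factor, and $\lambda_1=\lambda_2-o(1)=O(n^{9\epsilon})$, which is bounded whenever we shrink $\epsilon$ enough — in particular $\lambda_1,\lambda_2 = o(\log n)$ so that whp there are only finitely many clean $3$-cycles in each of $G$ and $H$. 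I would first establish, via the method of moments (Brun's sieve) or the Stein--Chen method, that $X_1$ and $X_2$ both converge in distribution to $\mathrm{Po}(\lambda)$ where $\lambda=\lim \lambda_1 = \lim \lambda_2$ (passing to a subsequence if the limit does not exist, or just bounding everything in terms of $\lambda_1,\lambda_2$ directly without taking a limit). The factorial moments are easy to control because distinct potential clean $3$-cycles typically share no vertices, so the dominant contribution to $\E\binom{X_i}{k}$ is $\lambda_i^k/k!$ and the overlap terms contribute $o(1)$; here the bound $\lambda_i = O(n^{9\epsilon})$ combined with the edge/vertex-count bookkeeping for overlapping configurations does the work.

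Next I would couple. Since both $X_1$ and $X_2$ are close to $\mathrm{Po}(\lambda)$, I can couple $X_1$ and $X_2$ so that whp $X_1 = X_2 =: m$, and moreover (by a standard argument) so that whp each $X_i$ equals its common value $m$ with $m = O(1)$ and no two of the $m$ clean $3$-cycles share a vertex. Conditioned on $X_1 = X_2 = m$ with all configurations vertex-disjoint, the location of the clean $3$-cycles in $G$ is uniform over all $m$-tuples of pairwise vertex-disjoint potential clean $3$-cycles — because each such placement has exactly the same probability, $p^{9m}(1-o(1))$, of being the set of clean $3$-cycles present (the conditioning on no others being present is symmetric under the automorphisms of the vertex set) — and likewise for $H$ with $\pi^{3m}(1-o(1))$. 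Hence both conditional distributions are (asymptotically) the uniform distribution on the same finite set, so they can be coupled to agree whp. Combining the two couplings gives $\mc{C}_1 = \mc{C}_2$ whp.

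The main obstacle I anticipate is not the Poisson convergence itself but the bookkeeping needed to rule out the "bad" events in the conditioning: namely that whp there is no pair of clean $3$-cycles sharing a vertex (or, worse, an edge or hyperedge), and that the conditional distribution given the count really is close to uniform on vertex-disjoint placements rather than being skewed by such overlaps. For overlapping pairs of clean $3$-cycles, one must check that the expected number is $o(1)$: a union bound over the ways two clean $3$-cycles can intersect, weighted by $p$ to the number of distinct edges (resp.\ $\pi$ to the number of distinct hyperedges), must beat the $O(n^{9\epsilon})$ growth of $\lambda_1$, which forces a careful case analysis of intersection patterns and a correspondingly small choice of $\epsilon$. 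A secondary subtlety is that $\pi = (1-n^{-\delta})p^3$ rather than exactly $p^3$, so $p^9$ and $\pi^3$ differ by a factor $(1-n^{-\delta})^3 = 1-o(1)$; one has to confirm this discrepancy only affects the Poisson parameter by $o(1)$ (which it does, as $\lambda_1 - \lambda_2 = \lambda_1(1 - (1-n^{-\delta})^3) = O(n^{9\epsilon} n^{-\delta}) = o(1)$ once $\epsilon < \delta/9$), and hence does not obstruct the coupling. Once these estimates are in place, the coupling is a routine consequence of total-variation closeness to a common Poisson law.
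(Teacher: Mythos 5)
Your approach is essentially the paper's: approximate the counts $X_1,X_2$ by Poisson laws (the paper cites a Stein--Chen bound from Ross, where you suggest method of moments or Stein--Chen), observe $\lambda_1-\lambda_2=o(1)$ to get $d_{\text{TV}}(X_1,X_2)=o(1)$, note that conditional on the count and on pairwise vertex-disjointness the placement is uniform (the paper gets this \emph{exactly} by vertex symmetry, which is cleaner than your ``same probability up to $1-o(1)$'' phrasing), and combine to couple $\mc{C}_1=\mc{C}_2$ whp. One small slip: you claim $\lambda_1,\lambda_2=o(\log n)$ and hence ``finitely many'' clean $3$-cycles, but $\lambda_i=O(n^{9\epsilon})$ may grow polynomially; what is actually needed, and what the paper uses, is only that whp the clean $3$-cycles are pairwise vertex-disjoint, which follows from the much weaker second-moment bound you also sketch.
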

\begin{proof}
For two random variables $W,Z$ taking values in a countable set $\Omega$, let
\[
 d_{\text{TV}}(W,Z) = \frac{1}{2}\sum_{\omega \in \Omega}\left| \Pb\left( W =\omega\right) - \Pb\left( Z=\omega\right)\right|
\]
denote their \emph{total variation distance}.  From Theorem 4.7 in \cite{ross2011fundamentals} (which originally appeared in \cite{arratia1989two}),
 the total variation distance between the distributions of $X_1$ and $X_2$ and the Poisson distributions $\mathbf{Poi}(\lambda_1)$ and $\mathbf{Poi}(\lambda_2)$ is $o(1)$, respectively. As $\lambda_2=\lambda_1-o(1)$, the total variation distance between $\mathbf{Poi}(\lambda_1)$ and $\mathbf{Poi}(\lambda_2)$ is also $o(1)$, and so $ d_{\text{TV}}(X_1,X_2)=o(1)$.

 Both in $G$ and in $H$, whp all clean $3$-cycles are pairwise vertex disjoint since $\lambda_1, \lambda_2=O(n^{9 \epsilon})$ (decreasing $\epsilon$ if necessary). Let $i \in \{1,2\}$. Denote by $\Gamma'$ the set of all collections of clean $3$-cycles which are not pairwise vertex disjoint, then $\Pb(\mc{C}_i \in \Gamma')=o(1)$. 
 For $t\ge 0$, let $\Gamma_t$ be the set of all collections of $t$ disjoint clean $3$-cycles. Conditional on $X_i=t$ and $\mc{C}_i \notin \Gamma'$, by symmetry $\mc{C}_i$ is uniformly distributed on $\Gamma_t$. Therefore,
 \begin{align*}
  d_{\text{TV}} (\mc{C}_1, \mc{C}_2) &\le \frac{1}{2}\sum_t \sum_{\gamma \in \Gamma_t} \left|\frac{\Pb(X_1=t)}{|\Gamma_t|}-\frac{\Pb(X_2=t)}{|\Gamma_t|}\right| +\Pb(\mc{C}_1 \in \Gamma')+\Pb(\mc{C}_2\in \Gamma')\\
  &=d_{\text{TV}}(X_1,X_2)+o(1)=o(1).
 \end{align*}
Since the total variation distance of $\mc{C}_1$ and $\mc{C}_2$ is $o(1)$, their distributions can be coupled so that whp $\mc{C}_1=\mc{C}_2$.
\end{proof}

We start the construction of $G=G(n,p)$ and $H=H_3(n,\pi)$ by choosing $\mc{C}_1$ and $\mc{C}_2$, coupling their distributions so that whp $\mc{C}_1=\mc{C}_2$. If $\mc{C}_1 \neq \mc{C}_2$, we say that the coupling has failed.  We assume that the clean $3$-cycles in $\mc{C}_1=\mc{C}_2$ are pairwise vertex disjoint, which holds with probability $1-o(1)$, otherwise we also say the coupling has failed. Let $C_1$ be the set of edges and $C_2$ be the set of hyperedges in the revealed clean $3$-cycles. Let $\mathcal{L}_1$ and $\mathcal{L}_2$ be the events that $G$ and $H$ contain no other clean $3$-cycles, respectively.

We now proceed with the coupling as in Algorithm 11 in \cite{riordan2018random}, revealing the hyperedges of $H$ and some triangles of $G$ one by one (skipping those which we already included with the clean $3$-cycles). At step $j$, we calculate the conditional probability $\pi_j$ of the triangle edge set $E_j$ being present in $G(n,p)$ {and} the \emph{conditional probability} $\pi'_j$ of the corresponding hyperedge $h_j$ being present in $H_3(n,\pi)$, based on the information revealed so far, the edges and hyperedges in $C_1$ and $C_2$, and the events $\mathcal{L}_1$ and $\mathcal{L}_2$. As in \cite{riordan2018random}, as long as $\pi_j' \le \pi_j$ we are ok: we flip a coin with success probability $\pi'_j / \pi_j$, and in the case of success test for the triangle in $G$, including the edge $h_j$ in $H$ iff the coin succeeds and the triangle was included in $G$. If $\pi_j' > \pi_j$, we include $h_j$ in $H$ with probability $\pi'_j$, and if this happens the coupling fails. After we have done this for every hyperedge, $H$ is constructed with the correct distribution, and we pick $G$ with the conditional distribution of $G(n,p)$ given the revealed information. It remains to show that for an appropriate choice of $\pi=p^3(1-o(1))$, the probability that the coupling fails is $o(1)$.

As in \cite{riordan2018random}, we assume for notational simplicity that $p\le n^{-2/3+o(1)}$, although it is clear from the proof that the argument goes through if $p\le n^{-2/3+\epsilon}$ for some small constant $\epsilon>0$. As in \cite{riordan2018random}, there is some $\Delta=n^{o(1)}$ so that whp, every vertex in $H_3(n,\pi)$ has degree at most $\Delta/3$. Let $\mc{B}_1$ denote the bad event that some vertex in the final version of $H$ has degree more than $\Delta/3$, so $\Pb(\mc{B}_1)=o(1)$. Let $\mc{B}_2$ be the event that the final version of $H$ contains an avoidable configuration, then $\Pb(\mc{B}_2)=o(1)$. We will see that if our coupling fails, then $\mc{B}_1 \cup \mc{B}_2$ holds. Let $A_i$ denote the event that the triangle $E_i$ is in $G$.

Suppose we have reached step $j$ of the algorithm where we test for the hyperedge $h_j$ and the event $A_j$. First note that we always have $\pi'_j \le \pi$. To see this, consider the random hypergraph $H'$ where all the revealed hyperedges and the hyperedges from $C_2$ are included, and all hyperedges we have found not to be present so far are excluded, and all other hyperedges are present independently with probability $\pi$. Then $\mathcal{L}_2$ is a down set in the product probability space corresponding to $H'$, and the event that the hyperedge $h_j$ is present is an up set, so
\[
\pi_j'= \Pb(h_j \in H' \mid \mathcal{L}_2) \le \Pb(h_j \in H') =\pi.
\]
Even though this is not how we started the coupling, we can think of the state of $G$ and $H$ at step $j$ as though we had started by testing for all clean $3$-cycles $\gamma \in \Gamma$ in $G$ and in $H$, and received the answer `yes' for $\gamma \in \mc{C}_1$ and the answer `no' for all other $\gamma \in \Gamma$. Then similarly as in \cite{riordan2018random}, let $ R$ be the set of edges found to be in $G$ so far (both from the revealed triangles in the first $j-1$ steps \emph{and} from $C_1$). Let $N$ denote the set of all $i < j$ where we tested for $A_i$ and received the answer `no', and also add an index $i$ to $N$ for every $\gamma \in \Gamma \setminus \mc{C}_1$ (i.e., we add an element to $N$ for every clean $3$-cycle we have excluded). For easier notation, we will now also
 write $E_i$ for the edge set of a clean $3$-cycle with index $i \in N$. Let $N_1$ be the set of all $i\in N$ such that $E_i \cap E_j \neq \emptyset$. Now we can bound $\pi_j$ from below exactly as in equation (4) in \cite{riordan2018random},
\[
 \pi_j \ge p^3 (1-Q) \text{ where } Q=Q_j=\sum_{i \in N_1}p^{|E_i \setminus (E_j \cup R)|}.
\]
It remains to bound $Q$, showing that either $Q=o(1)$, or that if not and the coupling fails, $\mc{B}_1 \cup \mc{B}_2$ holds.

The contribution to $Q$ from all $i$ where $E_i$ is a triangle (rather than a clean $3$-cycle) can be bounded exactly as in \cite{riordan2018random} as long as $\mc{B}_1$ does not hold. Crucially, the previous `bad case' is no longer a problem: suppose that $j$ is `dangerous', i.e.\ there is a triangle $E_i$ with  $i\in N_1$ and $E_i \subset E_j \cup R$. This means that in the previous step $i < j$, we tested for the triangle $E_i$ in $G$ and received the answer `no'. But then $E_i$ cannot be the middle triangle in any clean $3$-cycle in the final version of $H$ --- we know what all the clean $3$-cycles are in both $G$ and  $H$, and if $E_i$ were the middle triangle of one, its edges would have been included in $G$ from the start of the coupling. But then $\pi_i=1$, and if we had tested for $E_i$ we would have received the answer `yes'. So if the coupling fails at step $j$, as $E_i \subset E_j \cup R$,  by Lemma \ref{lemma6} $H$ contains a bad configuration, so $\mc{B}_2$ holds.

Therefore, the contribution to $Q$ from all $E_i$ which are triangles is either $o(1)$, or if not and the coupling fails, $\mc{B}_1 \cup \mc{B}_2$ holds.

Now consider the contribution to $Q$ from some $E_i$, $i \in N_1$, which is a clean $3$-cycle. We want to bound $e_i=|E_i \setminus E(S)|$ from below, where $S$ is the graph on the vertex set of $E_i$ with the edges from $E_j \cup R$ on that vertex set. Suppose $S$ has $k+1$ components, where $0 \le k \le 4$ ($S$ cannot have six components as $E_i \cap E_j$ contains at least one edge). Then $e_i$ is at least the number of edges in $E_i$ between the components of $S$. This can be bounded from below by the minimum number of edges between different parts of a clean $3$-cycle if we partition its vertices into $k+1$ parts --- it is straightforward to check that for $k=1$, $e_i \ge 2$, for $k=2$, $e_i \ge 4$, for $k=3$, $e_i \ge 6$, and for $k=4$, $e_i \ge 8$.

In the connected case where $k=0$, if $e_i=0$, then $E_i \subset E_j \cup R$. Suppose this is the case and the coupling fails, then by Lemma \ref{lemma6}, either the final version of $H$ contains an avoidable configuration and $\mc{B}_2$ holds, or all three triangles $T_1$, $T_2$, $T_3$ of $E_i$ 
 are each either present as hyperedges in $H$ or the middle triangles of a clean $3$-cycle in $H$. Denote the corresponding hyperedges by $t_1$, $t_2$, $t_3$.  
At most one of them can be the middle triangle of a clean $3$-cycle, because we assumed that all clean $3$-cycles are vertex disjoint. Not all $t_i$, $i \in \{1,2,3\}$ are present in $H$ because then the clean $3$-cycle corresponding to $E_i$ would be present, but $i \in N$. So exactly one triangle, say $T_1$, is the middle triangle of a clean $3$-cycle, and $t_2$ and $t_3$ are present in $H$. But then this clean $3$-cycle and $t_2$ and $t_3$ form an avoidable configuration (it can easily be checked that Definition 7 in  \cite{riordan2018random} applies; note that in the hypergraph $H_0$ under consideration, $v(H_0) \le 9$, $e(H_0)=5$, $c(H_0)=1$, so $n(H_0) \ge 2$). Therefore, $\mc{B}_2$ holds.

So if $k=0$, $e_i=0$ and the coupling fails, then $\mc{B}_2$ holds. So suppose $e_i \ge 1$ for all $E_i$ where $k=0$.

As in equation (6) of the original proof, as long as $\mc{B}_1$ does not hold, there are at most $O(n^{k+o(1)})$ instances $i$ where $S$ has $k+1$ components. Therefore, either the contribution to $Q$ from all $E_i$ which are clean $3$-cycles is at most
\[n^{o(1)} \left(p +np^2+n^2p^4+n^3p^6+n^4p^8 \right) =o(1),\]
or if not and the coupling fails, $\mc{B}_1 \cup \mc{B}_2$ holds. Noting that we always have $\pi'_j \le \pi$, we can choose $\pi \sim p^3$ so that whp the coupling does not fail. As in the original proof, it is in fact possible to pick $\pi=p^3(1-n^{-\delta})$ for a small constant $\delta>0$. \qed

  \bibliographystyle{plainnat}

\end{document}